\numberwithin{equation}{section}
\newtheorem{thm}{Theorem}
\newtheorem{proposition}{Proposition}[section]
\newtheorem{corollary}[proposition]{Corollary}
\newtheorem{lemma}[proposition]{Lemma}
\begin{document}
\author{Titus Lupu}
\email{titus.lupu@eth-its.ethz.ch, titus.lupu@math.u-psud.fr}
\title{Loop percolation on discrete half-plane}

\newenvironment{proofof1}{\paragraph{\textit{Proof of the Theorem \ref{ThmCcritLeq1}.}}}{\hfill$\square$}
\newenvironment{proofof2}{\paragraph{\textit{Proof of the Theorem \ref{ThmIntermVert}.}}}{\hfill$\square$}
\newenvironment{proofof3}{\paragraph{\textit{Proof of the Theorem \ref{ThmCutOff}.}}}{\hfill$\square$}
\newenvironment{proofof4}{\paragraph{\textit{Proof of the Theorem \ref{ThmCutOff}.}}}{\hfill$\square$}
\newenvironment{proofofprop}{\paragraph{\textit{Proof of the Proposition \ref{PropStrictIneq}.}}}{\hfill$\square$}

\begin{abstract}
We consider the random walk loop soup on the discrete half-plane 
$\mathbb{Z}\times\mathbb{N}^{\ast}$ and study the percolation problem, i.e. the existence of an infinite cluster of loops. We show that the critical value of the intensity is equal to $\frac{1}{2}$. The absence of percolation at intensity $\frac{1}{2}$ was shown in a previous work. We also show that in the supercritical regime, one can keep only the loops up to some large enough upper bound on the diameter and still have percolation.
\end{abstract}

\maketitle

\section{Introduction}
\label{SecInro}

We will consider discrete (rooted) loops on $\mathbb{Z}^{2}$, that is to say finite paths to the nearest neighbours on $\mathbb{Z}^{2}$ that return to the origin and visit at least two vertices. The rooted random walk loop measure $\mu_{\mathbb{Z}^{2}}$ gives to each rooted loop of lengths $2n$ the mass $(2n)^{-1}4^{-2n}$. It was introduced in \cite{LawlerFerreras2007RWLoopSoup}. In \cite{LeJan2011Loops} are considered loops parametrised by continuous time rather than discrete time. $\mu_{\mathbb{Z}^{2}}$ has a continuous analogue, the measure $\mu_{\mathbb{C}}$ on the Brownian loops on $\mathbb{C}$. Let $\mathbb{P}^{t}_{z,z'}(\cdot)$ be the standard Brownian bridge probability measure from $z$ to $z'$ of length $t$. 
$\mu_{\mathbb{C}}$ is a measure on continuous time-parametrised loops on $\mathbb{C}$ defined as
\begin{displaymath}
\mu_{\mathbb{C}}(\cdot):=\int_{\mathbb{C}}\int_{t>0}
\mathbb{P}^{t}_{z,z}(\cdot)\dfrac{dt}{2\pi t^{2}}
\dfrac{d\bar{z}\wedge dz}{2i},
\end{displaymath}
where $\frac{d\bar{z}\wedge dz}{2i}$ is the standard volume form on $\mathbb{C}$. The measure $\mu_{\mathbb{C}}$ was introduced in \cite{LawlerWerner2004ConformalLoopSoup}.

Given $\alpha>0$ we will denote by $\mathcal{L}^{\mathbb{Z}^{2}}_{\alpha}$ respectively $\mathcal{L}^{\mathbb{C}}_{\alpha}$ the Poisson ensemble of intensity $\alpha\mu_{\mathbb{Z}^{2}}$ respectively $\alpha\mu_{\mathbb{C}}$, called random walk respectively Brownian loop soup. In \cite{LawlerFerreras2007RWLoopSoup} it was shown that one can approximate $\mathcal{L}^{\mathbb{C}}_{\alpha}$ by a rescaled version of
$\mathcal{L}^{\mathbb{Z}^{2}}_{\alpha}$. If $A$ is a subset of $\mathbb{Z}^{2}$ we will denote by $\mathcal{L}^{A}_{\alpha}$ the subset of 
$\mathcal{L}^{\mathbb{Z}^{2}}_{\alpha}$ made of loops contained in $A$. If $U$ is an open subset of $\mathbb{C}$ we will denote by $\mathcal{L}^{U}_{\alpha}$ the subset of $\mathcal{L}^{\mathbb{C}}_{\alpha}$ made of loops contained in $U$. For $\delta >0$ we will denote by $\mathcal{L}^{A,\geq\delta}_{\alpha}$ respectively 
$\mathcal{L}^{U,\geq\delta}_{\alpha}$ the subset of random walk loops $\mathcal{L}^{A}_{\alpha}$ respectively Brownian loops $\mathcal{L}^{U}_{\alpha}$
made of loops of diameter greater or equal to $\delta$. Similarly we will use the notation $\mathcal{L}^{A,\leq\delta}_{\alpha}$ for the loops of diameter smaller or equal to $\delta$.

We will consider clusters of loops. Two loops $\gamma$  and $\gamma'$ in a Poisson ensemble of discrete or continuous loops belong to the same cluster if there is a chain of loops 
$\gamma_{0},\gamma_{1},\dots,\gamma_{n}$ in this Poisson ensemble such that $\gamma_{0}=\gamma$, $\gamma_{n}=\gamma'$ and $\gamma_{i}$ and $\gamma_{i-1}$ visit a common point. For all $\alpha>0$, loops in $\mathcal{L}^{\mathbb{Z}^{2}}_{\alpha}$ as well as in $\mathcal{L}^{\mathbb{C}}_{\alpha}$ form a single cluster. Thus we will consider loops on discrete half-plane $\mathsf{H}=\mathbb{Z}\times\mathbb{N}^{\ast}$ and on continuous half-plane $\mathbb{H}=\lbrace z\in\mathbb{C}\vert \Im(z)>0\rbrace$, mainly from the angle of existence of an unbounded cluster. 

The percolation problem for Brownian loops was studied in \cite{SheffieldWerner2012CLE}. It was shown that there is a critical intensity $\alpha_{\ast}^{\mathbb{H}}\in (0,+\infty)$ such that for $\alpha\in (0,\alpha_{\ast}^{\mathbb{H}}]$, $\mathcal{L}^{\mathbb{H}}_{\alpha}$ has only bounded clusters, and for $\alpha>\alpha_{\ast}^{\mathbb{H}}$ the loops in $\mathcal{L}^{\mathbb{H}}_{\alpha_{\ast}^{\mathbb{H}}}$ form one single cluster. The critical intensity was identified to be equal to $1$. But actually $\alpha_{\ast}^{\mathbb{H}}=\frac{1}{2}$. In \cite{SheffieldWerner2012CLE} the outer boundaries of outermost clusters in a sub-critical Brownian loop soup were identified to be a Conformal Loop Ensemble $CLE_{\kappa}$ with the following relation between $\alpha$ and $\kappa$.
\begin{displaymath}
\alpha=\dfrac{(3\kappa-8)(6-\kappa)}{2\kappa}.
\end{displaymath}
The critical value of $\kappa$ corresponds to $CLE_{4}$. Actually the right relation between $\alpha$ and $\kappa$ is
\begin{displaymath}
\alpha=\dfrac{1}{2}\dfrac{(3\kappa-8)(6-\kappa)}{2\kappa}.
\end{displaymath}
So the value of $\alpha$ that corresponds to $\kappa=4$ is $\frac{1}{2}$ and not $1$. The missing factor $\frac{1}{2}$ appears in the Lawler's work \cite{Lawler2009PartFuncSLELoopMes} (Proposition $2.1$). The error in
\cite{SheffieldWerner2012CLE} comes from an error in the article \cite{LawlerWerner2004ConformalLoopSoup} by Lawler and Werner. There the authors consider a Brownian loop soup in the half-plane and a continuous path cutting the half-plane, parametrised by the half-plane capacity. For such a path the half-plane capacity at time $t$ equals $2t$. It discovers progressively new Brownian loops and the authors map these loops conformally to the origin. In the Theorem $1$ they identify the processes of these conformally mapped Brownian loops to be a Poisson point process with intensity proportional to the Brownian bubble measure. In the identification of intensity there is a factor $2$ missing. Actually in the article \cite{LawlerWerner2004ConformalLoopSoup}, the Theorem $1$ is inconsistent with the Proposition $11$.

The problem of percolation by random walk loops was studied in \cite{LeJanLemaire2012LoopClusters}, \cite{ChangSapozhnikov2014PercLoops}, \cite{Lupu2014LoopsGFF} and \cite{Chang2015SuperCritLoops} in more general setting than dimension $2$. We will focus on the percolation by loops in $\mathcal{L}^{\mathsf{H}}_{\alpha}$. The probability of existence of an infinite cluster of loops follows a $0-1$ law and there can be at most one infinite cluster (\cite{Lupu2014LoopsGFF}). Moreover for $\alpha=\frac{1}{2}$ loops in $\mathcal{L}^{\mathsf{H}}_{\frac{1}{2}}$ do not percolate (\cite{Lupu2014LoopsGFF}). This result was obtained through a coupling with the massless Gaussian free field. By considering just the loops that go back and forth between two neighbouring vertices we get a lower bound on clusters of loops by clusters of an i.i.d. Bernoulli percolation. In particular this implies that for $\alpha$ large enough loops in $\mathcal{L}^{\mathsf{H}}_{\alpha}$ percolate. Hence as the parameter $\alpha$ increases there is a phase transition and a critical value $\alpha_{\ast}^{\mathsf{H}}\in[\frac{1}{2},+\infty)$ of the parameter. Using the results on the clusters of Brownian loops from \cite{SheffieldWerner2012CLE} and the approximation result from \cite{LawlerFerreras2007RWLoopSoup} we will show in section \ref{SecCcritLeq1} the following:

\begin{thm}
\label{ThmCcritLeq1}
For all $\alpha>\frac{1}{2}$ there is an infinite cluster of loops in $\mathcal{L}^{\mathsf{H}}_{\alpha}$. In particular $\alpha_{\ast}^{\mathsf{H}}=\frac{1}{2}$. Moreover, given $\alpha>\frac{1}{2}$, there is $n\in\mathbb{N}^{\ast}$ large enough, such that $\mathcal{L}^{\mathsf{H},\leq n}_{\alpha}$ percolates too.
\end{thm}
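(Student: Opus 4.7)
The plan is a two-step reduction: first establish percolation of Brownian loops of bounded diameter in the continuous half-plane $\mathbb{H}$, then transfer this to the discrete half-plane $\mathsf{H}$ via the loop-soup approximation of \cite{LawlerFerreras2007RWLoopSoup}.

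Fix $\alpha>\frac{1}{2}$. Starting from the (corrected) Sheffield--Werner result $\alpha_{\ast}^{\mathbb{H}}=\frac{1}{2}$, the ensemble $\mathcal{L}^{\mathbb{H}}_{\alpha}$ almost surely contains an infinite cluster. The event ``$\mathcal{L}^{\mathbb{H},\leq D}_{\alpha}$ has an infinite cluster'' is monotone in $D$ and its increasing union contains that a.s.\ event, so for some finite $D$ it has positive probability; by translation invariance and mixing of Poisson point processes under horizontal shifts, it then has probability one. From this almost sure infinite cluster one extracts a finite-scale crossing event: for $L$ large, the probability that $\mathcal{L}^{\mathbb{H},\leq D}_{\alpha}$ contains a chain of loops inside $[-L,L]\times(0,L]$ joining its left side to its right side can be made arbitrarily close to $1$.

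The next step transports this crossing event to the lattice. By \cite{LawlerFerreras2007RWLoopSoup}, the rescaling $\frac{1}{N}\mathcal{L}^{\mathsf{H}}_{\alpha}$ converges in distribution to $\mathcal{L}^{\mathbb{H}}_{\alpha}$ on bounded regions, and the same comparison applies with the diameter cut-offs, continuous diameter $\leq D$ corresponding to discrete diameter $\leq DN$. Assuming one handles the continuity of the cluster relation under this convergence, the analogous macroscopic crossing of a rectangle of discrete dimensions $(2LN)\times LN$ holds for $\mathcal{L}^{\mathsf{H},\leq DN}_{\alpha}$ with probability close to one when $N$ is large. Horizontal translates of this crossing by $2LN$ become asymptotically independent and each occurs with probability bounded below by $1-o(1)$; a standard gluing argument combining horizontal and vertical crossings (using FKG for Poisson ensembles and the fact that vertical crossings of height $L$ have comparable probability) produces an infinite cluster a.s.\ in $\mathcal{L}^{\mathsf{H},\leq DN}_{\alpha}$. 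This gives the stronger statement with $n=DN$, hence $\alpha_{\ast}^{\mathsf{H}}\leq\frac{1}{2}$; combined with the lower bound from \cite{Lupu2014LoopsGFF}, we conclude $\alpha_{\ast}^{\mathsf{H}}=\frac{1}{2}$.

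The main obstacle is the continuity of cluster connectivity under the loop-soup convergence. Two Brownian loops lie in the same cluster as soon as their images share a single point, a property which is not robust under perturbation: discrete approximations may pass very close to that point without sharing a vertex, and conversely a discrete connection might correspond to Brownian loops which are merely near. The natural remedy is to work with a thickened notion of cluster at intermediate scale---requiring consecutive Brownian loops to meet in sets of positive diameter, or equivalently that discrete approximants come within graph distance one---and to check, via a Wiener-sausage or coupling estimate, that this thickened cluster structure is stable under the approximation while still producing genuine clusters of loops on each side. I expect the bulk of the technical work of the proof to lie in this step.
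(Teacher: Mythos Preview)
Your overall plan is the paper's---transfer a Brownian crossing estimate to the lattice via the Lawler--Trujillo~Ferreras coupling, then run a block percolation---and you correctly single out the stability of the intersection relation as the chief technical obstacle. But your first reduction contains a gap: the inclusion $\bigcup_{D}\{\mathcal{L}^{\mathbb{H},\leq D}_{\alpha}\text{ percolates}\}\supseteq\{\mathcal{L}^{\mathbb{H}}_{\alpha}\text{ percolates}\}$ is not automatic, since an infinite cluster of the full soup could in principle require loops of unbounded diameter along every path to infinity. (By scale invariance the probability on the left is independent of $D>0$, hence $0$ or $1$, but knowing that the full soup percolates does not by itself decide which.) The conclusion is in fact true, and the clean way to see it is exactly what the paper exploits: Sheffield--Werner applied to a \emph{bounded} simply connected domain $U$ says that for $\alpha>\tfrac12$ all loops of $\mathcal{L}^{U}_{\alpha}$ lie in one cluster, and those loops automatically have bounded diameter. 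Once you use the input in that form, the detour through percolation of $\mathcal{L}^{\mathbb{H},\leq D}_{\alpha}$ and your subsequent box-crossing extraction become unnecessary. The paper goes straight to a fixed pair of nested rectangles $Q_{int}\subset Q_{ext}$, imposes a \emph{lower} diameter cut-off $\geq\delta$ (so that only finitely many loops remain and the coupling applies), and designs a ``special crossing event''---a long horizontal crossing inside $Q_{int}$ together with two short vertical crossings near its ends, all by loops contained in $Q_{ext}$---whose probability tends to $1$ as $\delta\to 0$. The coupling then carries this finite event to $\mathcal{L}^{NQ_{ext}\cap\mathsf{H},\geq N\delta}_{\alpha}$ as $N\to\infty$.

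On the obstacle you flag, the paper's resolution differs from your thickening/sausage suggestion. One shows that a planar Brownian path started at $0$ almost surely, at infinitely many dyadic scales $2^{-j}$, makes a full turn inside the annulus $\{2^{-j-1}<|z|<2^{-j}\}$ with a safety margin of $\tfrac{1}{12}2^{-j}$ (the paper's condition~$\mathcal{C}_{j}$). Consequently, whenever two Brownian loops meet at a point, a small uniform perturbation of one still disconnects a small disc about that point from infinity inside a fixed annulus, while a small perturbation of the other still crosses that annulus; the perturbed loops therefore intersect. This yields convergence of the entire intersection graph on $\mathcal{L}^{Q_{ext},\geq\delta}_{\alpha}$ without any coarsened notion of cluster. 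Finally, in place of your FKG gluing the paper's special crossing events on translated/rotated copies of $Q_{ext}$ are built so that adjacent open edges automatically share a cluster and edges at graph distance $\geq 1$ depend on disjoint sets of loops; this gives a $1$-dependent edge percolation on $\mathsf{H}$ with marginal close to $1$, and Liggett--Schonmann--Stacey concludes. Since every loop used lies in some copy of $NQ_{ext}$, the diameter bound $\leq 6N$ is obtained for free.
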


That is to say the critical intensity parameter for the two-dimensional Brownian loop soups and random walk loop soups is the same.

We will consider $1$-dependent edge percolations on $\mathsf{H}$, $(\omega(e))_{e~\text{edge}}$. By $1$-dependent percolation we mean that if two disjoint subsets of edges $E_{1}$ and $E_{2}$ are at graph distance at least $1$ then $(\omega(e))_{e\in E_{1}}$ and $(\omega(e))_{e\in E_{2}}$ are independent. According the results on locally dependent percolation by Liggett, Schonmann and Stacey in \cite{LiggettSchonmannStacey1997LocDepPerc}, for all $1$-dependent edge percolations on $\mathsf{H}$ with $p$ the probability of an edge to be open, there is an universal $\tilde{p}(p)\in[0,1)$ such that the $1$-dependent edge percolation contains an i.i.d. Bernoulli percolation with probability $\tilde{p}(p)$ of an edge to be open. Moreover the following constraint holds:
\begin{displaymath}
\lim_{p\rightarrow 1^{-}}\tilde{p}(p)=1.
\end{displaymath}

\section{Critical intensity parameter}
\label{SecCcritLeq1}

Let $\alpha,\delta >0$. Given $U$ an open subset of $\mathbb{H}$, we will denote by
$\mathcal{L}^{U,\geq\delta}_{\alpha}$ respectively $\mathcal{L}^{U\cap\mathsf{H},\geq\delta}_{\alpha}$ the subset of
$\mathcal{L}^{\mathbb{H}}_{\alpha}$ respectively $\mathcal{L}^{\mathsf{H}}_{\alpha}$ made of loops contained in $U$
and with diameter greater or equal to $\delta$. We will use the notations 
$\mathcal{L}^{U}_{\alpha}$ and $\mathcal{L}^{U\cap\mathsf{H}}_{\alpha}$ when there is a condition on the range but not
on the diameter.

Let $Q_{ext}$ and $Q_{int}$ be the following rectangles:
\begin{displaymath}
Q_{ext}:=(0,6)\times(0,3),\quad
Q_{int}:=(1,5)\times(1,2).
\end{displaymath}
We consider the subset of Brownian loops 
$\mathcal{L}^{Q_{ext},\geq\delta}_{\alpha}$, which is a.s. finite. We introduce the events 
$C_{1}(\mathcal{L}^{Q_{ext},\geq\delta}_{\alpha})$, 
$C_{2}(\mathcal{L}^{Q_{ext},\geq\delta}_{\alpha})$  and 
$C_{3}(\mathcal{L}^{Q_{ext},\geq\delta}_{\alpha})$ depending on the loops in $\mathcal{L}^{Q_{ext},\geq\delta}_{\alpha}$. 
The event $C_{1}(\mathcal{L}^{Q_{ext},\geq\delta}_{\alpha})$ will be satisfied if there is a cluster $K_{1}$ of loops in $\mathcal{L}^{Q_{int},\geq\delta}_{\alpha}$ such that in 
$\mathcal{L}^{(0,6)\times (1,2),\geq\delta}_{\alpha}$ there is a loop that intersects $K_{1}$ and $\lbrace 1\rbrace\times (1,2)$ and a loop that intersects $K_{1}$ and $\lbrace 5\rbrace\times (1,2)$. The two loops may be the same. $C_{2}(\mathcal{L}^{Q_{ext},\geq\delta}_{\alpha})$ will be satisfied if there is a cluster $K_{2}$ in 
$\mathcal{L}^{(1,2)^{2},\geq\delta}_{\alpha}$ such that in 
$\mathcal{L}^{(1,2)\times(0,3),\geq\delta}_{\alpha}$ there is a loop that intersects $K_{2}$ and $(1,2)\times\lbrace 1\rbrace$ and a loop that intersects $K_{2}$ and $(1,2)\times\lbrace 2\rbrace$. The event $C_{3}(\mathcal{L}^{Q_{ext},\geq\delta}_{\alpha})$ is similar to the event 
$C_{2}(\mathcal{L}^{Q_{ext},\geq\delta}_{\alpha})$ where the square $(1,2)^{2}$ is replaced by the square $(4,5)\times (1,2)$ and the rectangle
$(1,2)\times (0,3)$ by the rectangle $(4,5)\times(0,3)$. 
Next figure illustrates the event 
$\bigcap_{i=1}^{3}C_{i}(\mathcal{L}^{Q_{ext},\geq\delta}_{\alpha})$.

\begin{center}
\begin{pspicture}(0,-1.4)(9.3,5)
\psline[linestyle=dashed, linewidth=0.025](0,0)(9,0)
\psline[linestyle=dashed, linewidth=0.025](0,4.5)(9,4.5)
\psline[linestyle=dashed, linewidth=0.025](0,0)(0,4.5)
\psline[linestyle=dashed, linewidth=0.025](9,0)(9,4.5)
\psline[linewidth=0.025](1.5,1.5)(7.5,1.5)
\psline[linewidth=0.025](1.5,3)(7.5,3)
\psline[linewidth=0.025](1.5,1.5)(1.5,3)
\psline[linewidth=0.025](7.5,1.5)(7.5,3)
\psline[linestyle=dotted, linewidth=0.025](1.5,0)(1.5,4.5)
\psline[linestyle=dotted, linewidth=0.025](3,0)(3,4.5)
\psline[linestyle=dotted, linewidth=0.025](4.5,0)(4.5,4.5)
\psline[linestyle=dotted, linewidth=0.025](6,0)(6,4.5)
\psline[linestyle=dotted, linewidth=0.025](7.5,0)(7.5,4.5)
\psline[linestyle=dotted, linewidth=0.025](0,1.5)(9,1.5)
\psline[linestyle=dotted, linewidth=0.025](0,3)(9,3)
\pscurve[linestyle=dashed,linewidth=0.05]
(1.7,3.2)(2.1,3.5)(1.8,1)(2.6,1.3)(1.65,3.1)(1.7,3.2)
\pscurve[linestyle=dashed,linewidth=0.05]
(0.9,2.6)(0.9,2.5)(1.7,2.3)(2.7,2.1)(2.8,2.7)(1.5,2.7)(1.0,2.65)(0.9,2.6)
\pscurve[linewidth=0.05](1.8,1.6)(2.5,1.8)(2.2,2.5)(1.7,1.65)(1.8,1.6)
\pscurve[linewidth=0.05](2.5,2.75)(3.8,2.5)(4,1.7)(3,1.9)(4.7,2.1)(5.6,2.8)(5.8,2.4)(2.8,2.2)(2.4,2.7)(2.5,2.75)
\pscurve[linewidth=0.05](5.5,2.6)(6.5,2.25)(6.7,2)(5.8,1.7)(5.4,2.55)(5.5,2.6)
\pscurve[linewidth=0.05](6.2,2.6)(6.3,1.7)(7.3,2.2)(6.4,2.8)(6.2,2.6)
\pscurve[linestyle=dashed,linewidth=0.05]
(6.5,2.1)(6.8,1.95)(7.2,1.3)(6.3,2.15)(6.5,2.1)
\pscurve[linestyle=dashed,linewidth=0.05]
(7.1,2.8)(8,2.7)(7.2,1.9)(6.9,2.7)(7.1,2.8)
\pscurve[linestyle=dashed,linewidth=0.05]
(6.8,2.3)(7.3,2.7)(6.5,3.8)(6.7,4.1)(6.6,2.35)(6.8,2.3)
\rput(9.4,2.25){$Q_{ext}$}
\rput(4,3.2){$Q_{int}$}
\rput(4.65,-0.3){\begin{scriptsize}Fig.1: Illustration of the event 
$\bigcap_{i=1}^{3}C_{i}(\mathcal{L}^{Q_{ext},\geq\delta}_{\alpha})$.\end{scriptsize}}
\rput(4.65,-0.6){\begin{scriptsize} One should imagine that the smooth loops are actually Brownian.\end{scriptsize}}
\rput(4.65,-0.9){\begin{scriptsize} Only a set of loops that is sufficient for the event is represented.\end{scriptsize}}
\rput(4.65,-1.2){\begin{scriptsize} Full line loops stay inside $Q_{int}$. Dashed loops cross the boundary of $Q_{int}$.\end{scriptsize}}
\end{pspicture}
\end{center}

We will call the event $\bigcap_{i=1}^{3}C_{i}(\mathcal{L}^{Q_{ext},\geq\delta}_{\alpha})$ \textit{special crossing event with exterior rectangle $Q_{ext}$ and interior rectangle $Q_{int}$}. We will also consider translations, rotations and
rescaling of $Q_{ext}$ and $Q_{int}$ and deal with \textit{special crossing events} corresponding to the new rectangles. We are interested in the event 
$\bigcap_{i=1}^{3}C_{i}(\mathcal{L}^{Q_{ext},\geq\delta}_{\alpha})$
because then the loops in $\mathcal{L}^{Q_{ext},\geq\delta}_{\alpha}$ achieve the three crossings drawn on the figure $2$:

\begin{center}
\begin{pspicture}(0,-0.5)(9.3,5)
\psline[linestyle=dashed, linewidth=0.025](0,0)(9,0)
\psline[linestyle=dashed, linewidth=0.025](0,4.5)(9,4.5)
\psline[linestyle=dashed, linewidth=0.025](0,0)(0,4.5)
\psline[linestyle=dashed, linewidth=0.025](9,0)(9,4.5)
\psline[linewidth=0.025](1.5,1.5)(7.5,1.5)
\psline[linewidth=0.025](1.5,3)(7.5,3)
\psline[linewidth=0.025](1.5,1.5)(1.5,3)
\psline[linewidth=0.025](7.5,1.5)(7.5,3)
\psline[linestyle=dotted, linewidth=0.025](1.5,0)(1.5,4.5)
\psline[linestyle=dotted, linewidth=0.025](3,0)(3,4.5)
\psline[linestyle=dotted, linewidth=0.025](4.5,0)(4.5,4.5)
\psline[linestyle=dotted, linewidth=0.025](6,0)(6,4.5)
\psline[linestyle=dotted, linewidth=0.025](7.5,0)(7.5,4.5)
\psline[linestyle=dotted, linewidth=0.025](0,1.5)(9,1.5)
\psline[linestyle=dotted, linewidth=0.025](0,3)(9,3)
\psbezier[linewidth=0.05](1.2,2.6)(2.5,2.1)(7,2.8)(7.8,1.6)
\psbezier[linewidth=0.05](2.7,3.3)(2.7,2.9)(1.9,2.4)(2.3,1.2)
\psbezier[linewidth=0.05](6.7,3.3)(6.8,2.2)(6.6,1.7)(7.1,1.2)
\rput(9.4,2.25){$Q_{ext}$}
\rput(4,3.2){$Q_{int}$}
\rput(4.65,-0.3){\begin{scriptsize}Fig.2: The three crossings we are interested in.\end{scriptsize}}
\end{pspicture}
\end{center}

Next we show that if $\alpha>\frac{1}{2}$ and $\delta$ is small enough then the probability of $\bigcap_{i=1}^{3}C_{i}(\mathcal{L}^{Q_{ext},\geq\delta}_{\alpha})$ is close to $1$.

\begin{lemma}
\label{LemInstIntersec}
Let $Q$ be a rectangle of form $Q=(-a,a)\times (0,b)$. Let $\alpha>0$. Let $(B_{t})_{t\geq 0}$ be the standard Brownian motion on $\mathbb{C}$ started from $0$ and let $\mathcal{L}_{\alpha}^{Q}$ be a Poisson ensemble of loops independent from $B$. Then for all $\varepsilon>0$ there is 
$t\in(0,\varepsilon)$ such that $B$ at time $t$ intersects a loop in
$\mathcal{L}_{\alpha}^{Q}$. 
\end{lemma}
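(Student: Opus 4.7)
The plan is to show that $\tau_\Lambda := \inf\{t > 0 : B_t \in \Lambda\} = 0$ almost surely, where $\Lambda$ denotes the union of the ranges of the loops of $\mathcal{L}_\alpha^Q$. Since $\mathcal{L}_\alpha^Q$ and $B$ are independent, Blumenthal's $0$-$1$ law applied to $B$ conditionally on $\mathcal{L}_\alpha^Q$ yields $\Pr(\tau_\Lambda = 0 \mid \mathcal{L}_\alpha^Q) \in \{0, 1\}$ almost surely, so it is enough to bound this conditional probability below by a deterministic positive constant on an event of full probability.

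The first ingredient is the scale invariance of $\mu_{\mathbb{C}}$: the change of variables $(z, t) \mapsto (r z, r^2 t)$ preserves the defining integrand $\mathbb{P}^t_{z, z}(\cdot)\,\tfrac{dt}{2\pi t^2}\tfrac{d \bar z \wedge dz}{2i}$, so the mass $c_0 > 0$ that $\mu_{\mathbb{C}}$ assigns to loops of diameter in $[r/2, r]$ contained in $B(0, 4 r) \cap \mathbb{H}$ does not depend on $r$. Taking $r_n := 2^{-n}$ for $n$ large enough that $B(0, 4 r_n) \cap \mathbb{H} \subset Q$, the counts $N_n$ of loops of $\mathcal{L}_\alpha^Q$ of diameter in $[r_n/2, r_n]$ contained in $B(0, 4 r_n) \cap \mathbb{H}$ are independent $\mathrm{Poisson}(\alpha c_0)$ variables (essentially disjoint diameter bins in a Poisson point process on loop space). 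By the direct half of Borel-Cantelli, $N_n \geq 1$ infinitely often almost surely; for each such $n$ fix an arbitrary loop $\gamma_n$ meeting the constraints, and let $K_n$ denote its range, a connected compact subset of $\overline{B(0, 4 r_n)} \cap \overline{\mathbb{H}}$ of diameter at least $r_n / 2$.

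The second ingredient is a uniform Brownian hitting estimate: by Brownian scaling it is enough to exhibit universal constants $T, c_1 > 0$ such that for every connected compact $K \subset \overline{B(0, 4)} \cap \overline{\mathbb{H}}$ of diameter at least $1/2$,
\begin{equation*}
\Pr_0\bigl(\exists\, t \in (0, T),\ B_t \in K\bigr) \geq c_1.
\end{equation*}
This is a standard consequence of two-dimensional potential theory: such a $K$ has logarithmic capacity bounded below by a universal constant, hence the probability that $B$ hits $K$ before leaving $B(0, 8)$ is uniformly positive, while independently the exit time of $B(0, 8)$ is at most $T$ with probability close to $1$ once $T$ is large. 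Applying this after rescaling by $r_n$ gives $\Pr(\tau_{K_n} \leq T r_n^2 \mid \mathcal{L}_\alpha^Q) \geq c_1$ on $\{N_n \geq 1\}$.

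Combining the two ingredients, on the full-probability event $\{N_n \geq 1 \text{ i.o.}\}$ one has $\Pr(\tau_\Lambda \leq T r_n^2 \mid \mathcal{L}_\alpha^Q) \geq c_1$ for arbitrarily large $n$, and since $T r_n^2 \to 0$ this forces $\Pr(\tau_\Lambda = 0 \mid \mathcal{L}_\alpha^Q) \geq c_1 > 0$; by the $0$-$1$ dichotomy it then equals $1$, which is what we want. The main obstacle is the uniform hitting estimate, where one must ensure that the lower bound depends only on the diameter of $K$ and its containing ball, not on its detailed shape; once that is in hand, the rest is a bookkeeping combination of scale invariance and Borel-Cantelli.
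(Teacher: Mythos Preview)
Your argument is correct, but the paper's own proof takes a noticeably shorter route. Instead of conditioning on the loop soup and invoking a potential-theoretic hitting estimate, the paper works with the full half-plane loop soup $\mathcal{L}_{\alpha}^{\mathbb{H}}$ and exploits the \emph{joint} scale invariance of the pair $(B,\mathcal{L}_{\alpha}^{\mathbb{H}})$: the hitting time $T=\inf\{t>0: B_t\in\text{range}(\mathcal{L}_{\alpha}^{\mathbb{H}})\}$ satisfies $\lambda T\stackrel{d}{=}T$ for every $\lambda>0$, and since planar recurrence gives $T<\infty$ a.s., this forces $T=0$. One then simply notes that the loops of $\mathcal{L}_{\alpha}^{\mathbb{H}}\setminus\mathcal{L}_{\alpha}^{Q}$ are bounded away from $0$, so the instantaneous hit must be by a loop of $\mathcal{L}_{\alpha}^{Q}$. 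This bypasses the uniform capacity/hitting bound entirely. Your approach, on the other hand, is more robust: it separates the two ingredients (loops exist at every small dyadic scale; Brownian motion hits a set of definite diameter with uniform probability) and would continue to work in settings where the exact joint scaling of $B$ and the loop soup is not available, at the cost of importing the Beurling-type hitting estimate.
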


\begin{proof}
First we consider a loops soup in $\mathbb{H}$, $\mathcal{L}_{\alpha}^{\mathbb{H}}$, independent of $B$. Let 
\begin{displaymath}
T:=\inf\lbrace t>0\vert B_{t}~\text{is in the range of a loop in}~\mathcal{L}_{\alpha}^{\mathbb{H}}\rbrace.
\end{displaymath}
$T$ is a.s. finite. Indeed a loop in $\mathcal{L}_{\alpha}^{\mathbb{H}}$ delimits a domain with non-empty interior. Since the Brownian motion on $\mathbb{C}$ is recurrent, $B$ will visit this domain and thus intersect the loop. Let $\lambda >0$. The Poisson ensemble of loops $\mathcal{L}_{\alpha}^{\mathbb{H}}$ is invariant in law under the Brownian scaling
\begin{displaymath}
(\gamma(t))_{0\leq t\leq t_{\gamma}}\longmapsto
\lambda^{-\frac{1}{2}}(\gamma(\lambda t))_{0\leq t\leq \lambda^{-1} t_{\gamma}}.
\end{displaymath}
So does the Brownian motion $B$. Thus $\lambda T$ has the same law as $T$. It follows that $T=0$ a.s.

The set of loops $\mathcal{L}_{\alpha}^{\mathbb{H}}\setminus \mathcal{L}_{\alpha}^{Q}$ is at positive distance from $0$ thus $B$ cannot intersect it immediately. It follows that $B$ intersects immediately
$\mathcal{L}_{\alpha}^{Q}$.
\end{proof}

\begin{lemma}
\label{LemCrossing}
Let $a,\alpha>0$. There is a.s. a loop in $\mathcal{L}_{\alpha}^{(-a,a)^{2}}$ that intersects the real line $\mathbb{R}$.
\end{lemma}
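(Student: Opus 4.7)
The plan is to deduce the lemma from the fact that the $\mu_{\mathbb{C}}$-mass of Brownian loops contained in $(-a,a)^{2}$ and touching the real line is infinite. Since $\mathcal{L}_{\alpha}^{(-a,a)^{2}}$ is, by definition, the Poisson ensemble of intensity $\alpha\mu_{\mathbb{C}}$ restricted to loops whose range lies in $(-a,a)^{2}$, an infinite intensity for the sub-event ``touches $\mathbb{R}$'' forces almost surely infinitely many (in particular, at least one) such loops.

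To estimate this mass I would use the rooted representation
\[
\mu_{\mathbb{C}}(\,\cdot\,)=\int_{\mathbb{C}}\int_{0}^{+\infty}\mathbb{P}^{t}_{z,z}(\,\cdot\,)\,\frac{dt}{2\pi t^{2}}\,\frac{d\bar{z}\wedge dz}{2i}
\]
and restrict the integration to base points $z=x+iy$ with $x\in(-a/2,a/2)$ and $y\in(0,a/2)$, together with durations $t\in(y^{2},2y^{2})$. In this regime the imaginary part of a Brownian bridge from $z$ to $z$ of length $t$ is a one-dimensional Brownian bridge from $y$ to $y$ of length $t$, whose minimum attains $0$ with probability $\exp(-2y^{2}/t)\geq e^{-2}$ by the classical minimum-of-the-bridge formula. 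On the other hand, a Brownian bridge of duration $t\asymp y^{2}$ has typical displacement of order $\sqrt{t}\asymp y$, which is much smaller than $a$, so the loop stays inside $(-a,a)^{2}$ with probability bounded below by a positive constant; a standard tube estimate shows that this lower bound still holds jointly with the event that the imaginary part reaches $0$.

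Combining these two lower bounds gives an integrand bounded below by a positive constant $c$ on the chosen region, and
\[
\mu_{\mathbb{C}}\bigl(\{\gamma\subset(-a,a)^{2},\,\gamma\cap\mathbb{R}\neq\emptyset\}\bigr)\;\geq\;\frac{ca}{2\pi}\int_{0}^{a/2}\frac{dy}{2y^{2}}\;=\;+\infty,
\]
as required. The complex-conjugation symmetry of $\mu_{\mathbb{C}}$ means that treating only $y>0$ already suffices.

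The main technical point is the joint lower bound used above: the two events ``the loop stays inside $(-a,a)^{2}$'' and ``the loop touches $\mathbb{R}$'' are positively correlated for loops rooted near $\mathbb{R}$ and are not independent under $\mathbb{P}^{t}_{z,z}$, so marginal probabilities cannot simply be multiplied. The cleanest way around it is to condition on the imaginary-part bridge reaching $0$ and then invoke a standard small-time tube estimate for the spatial confinement, which is what supplies the constant $c$.
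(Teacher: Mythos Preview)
Your argument is correct. The joint lower bound you flag as the main technical point can indeed be handled cleanly: since the real and imaginary parts of the bridge are independent, it suffices to bound $\mathbb{P}(\text{imag.\ part hits }0\text{ and stays in }(-a,a))$, and by Brownian scaling this probability depends on $y$ only through $y/\sqrt{t}\in(1/\sqrt{2},1)$ and $a/\sqrt{t}\geq a/(y\sqrt{2})$; the latter tends to infinity as $y\to 0$, so for $y$ below some $y_{0}$ the confinement constraint removes at most half of the probability $e^{-2}$ of hitting $0$. Restricting the $y$-integral to $(0,y_{0})$ still gives $\int_{0}^{y_{0}}dy/y^{2}=+\infty$.

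The paper's proof is different and somewhat slicker. Instead of estimating $\mu_{\mathbb{C}}$ directly, it slices $\mathcal{L}_{\alpha}^{(-a,a)^{2}}$ into independent pieces $\mathcal{L}_{\alpha}^{(n)}$ according to dyadic duration intervals $t_{\gamma}\in(2^{-n-1},2^{-n})$, and then uses Brownian scaling to show that the probability that some loop in $\mathcal{L}_{\alpha}^{(n)}$ touches $\mathbb{R}$ is at least the corresponding probability for $\mathcal{L}_{\alpha}^{(0)}$, which is positive. Second Borel--Cantelli then gives infinitely many $n$ for which the event occurs. This avoids any quantitative bridge estimate and never writes down an integral; scaling does all the work. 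Your approach, by contrast, yields an explicit lower bound on the intensity, which could be useful if one wanted quantitative information, but pays for it with the joint-confinement technicality.
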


\begin{proof}
Let $\mathcal{L}_{\alpha}^{(n)}$ be the subset of 
$\mathcal{L}_{\alpha}^{(-a,a)^{2}}$ made of loops $\gamma$ of duration $t_{\gamma}$ comprised between $2^{-n-1}$ and 
$2^{-n}$. The family $(\mathcal{L}_{\alpha}^{(n)})_{n\geq 0}$ is independent. By Brownian scaling, the probability that a loop in $\mathcal{L}_{\alpha}^{(n)}$ intersects $\mathbb{R}$ is the same as a loop in 
$\mathcal{L}_{\alpha}^{(-a 2^{\sfrac{n}{2}},a 2^{\sfrac{n}{2}})^{2}}$ of duration comprised between $\frac{1}{2}$ and $1$ intersects $\mathbb{R}$. This is at least as big as the similar probability for $\mathcal{L}_{\alpha}^{(0)}$. Since the latter probability is non-zero, the intersection events occurs a.s. for infinitely many of $\mathcal{L}_{\alpha}^{(n)}$.
\end{proof}

\begin{lemma}
\label{LemMeetCluster}
Let $a,\alpha>0$. There is a.s. a loop in $\mathcal{L}_{\alpha}^{(-a,a)^{2}}$ that intersects the real line $\mathbb{R}$ and a loop in $\mathcal{L}_{\alpha}^{(-a,a)\times (0,a)}$.
\end{lemma}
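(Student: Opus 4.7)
The plan is to combine Lemma \ref{LemCrossing} with a localized version of Lemma \ref{LemInstIntersec}, using the Poissonian independence between loops that cross $\mathbb{R}$ and loops staying in the upper half of the square. First, I would decompose $\mathcal{L}_{\alpha}^{(-a,a)^{2}}$ as a disjoint union of three independent Poisson sub-ensembles obtained by restriction of the intensity measure: $\mathcal{L}_{\alpha}^{(-a,a)\times(0,a)}$, $\mathcal{L}_{\alpha}^{(-a,a)\times(-a,0)}$, and the collection $\mathcal{L}^{\times}$ of loops in $(-a,a)^{2}$ that intersect $\mathbb{R}$. Lemma \ref{LemCrossing} gives that $\mathcal{L}^{\times}$ is a.s.\ non-empty; pick a measurable selection $\gamma\in\mathcal{L}^{\times}$. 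Conditionally on $\gamma$ (and on the rest of $\mathcal{L}^{\times}$), the ensemble $\mathcal{L}_{\alpha}^{(-a,a)\times(0,a)}$ remains an independent Brownian loop soup of intensity $\alpha$.

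Next, set $\tau:=\inf\{t\in(0,t_{\gamma}):\Im\gamma(t)=0\}$ and $z_{0}:=\gamma(\tau)\in(-a,a)$. Under $\mu_{\mathbb{C}}$ the root $\gamma(0)$ is sampled with two-dimensional Lebesgue measure, so $\gamma(0)\notin\mathbb{R}$ a.s., and since $\gamma$ crosses $\mathbb{R}$ we also have $\tau\in(0,t_{\gamma})$ a.s. The Markov property of the Brownian bridge $\gamma$ at $\tau$ then gives that $(\gamma(\tau+s))_{0\le s\le t_{\gamma}-\tau}$ is a Brownian bridge from $z_{0}$ to $\gamma(0)$, which on any $[0,\varepsilon]$ with $\varepsilon<t_{\gamma}-\tau$ is absolutely continuous with respect to a standard Brownian motion started from $z_{0}$.

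To finish, I would rerun the argument of Lemma \ref{LemInstIntersec} translated by $z_{0}$: the rectangle $(-a,a)\times(0,a)$ contains a neighborhood of $z_{0}$ in $\overline{\mathbb{H}}$, the scale invariance of $\mathcal{L}_{\alpha}^{\mathbb{H}}$ centered at $z_{0}$ forces a Brownian motion from $z_{0}$ to hit $\mathcal{L}_{\alpha}^{\mathbb{H}}$ immediately, and loops of $\mathcal{L}_{\alpha}^{\mathbb{H}}$ not contained in $(-a,a)\times(0,a)$ lie a.s.\ at positive distance from $z_{0}$ by local finiteness of the intensity on loops of diameter bounded below. Hence a Brownian motion from $z_{0}$ independent of $\mathcal{L}_{\alpha}^{(-a,a)\times(0,a)}$ a.s.\ meets a loop of this soup in arbitrarily small positive time, and by absolute continuity so does the post-$\tau$ bridge $(\gamma(\tau+s))_{s\in[0,\varepsilon]}$. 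Thus $\gamma$ itself intersects some loop of $\mathcal{L}_{\alpha}^{(-a,a)\times(0,a)}$, giving the claim. The main obstacle, I expect, is the joint handling of a measurable selection of $\gamma$ from $\mathcal{L}^{\times}$ together with the Markov property at the random time $\tau$, so that conditioning on $\gamma$ still leaves $\mathcal{L}_{\alpha}^{(-a,a)\times(0,a)}$ as a fresh independent loop soup; both points are taken care of by the standard Palm calculus for Poisson ensembles of paths together with the explicit Brownian-bridge form of $\mu_{\mathbb{C}}$ recalled in the introduction.
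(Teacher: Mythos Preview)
Your argument is correct and follows essentially the same route as the paper: use Lemma~\ref{LemCrossing} to obtain a loop crossing $\mathbb{R}$, note its Poissonian independence from $\mathcal{L}_{\alpha}^{(-a,a)\times(0,a)}$, invoke local absolute continuity of the loop near its intersection with $\mathbb{R}$ with respect to Brownian motion, and conclude via Lemma~\ref{LemInstIntersec}. The only difference is that you spell out in detail (the three-way Poisson decomposition, the stopping time $\tau$, the bridge Markov property, the translated rerun of Lemma~\ref{LemInstIntersec}) what the paper compresses into two sentences.
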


\begin{proof}
Consider the subset of $\mathcal{L}_{\alpha}^{(-a,a)^{2}}$ made of loops intersecting $\mathbb{R}$. It is non empty according the lemma \ref{LemCrossing}. Moreover it is independent of
$\mathcal{L}_{\alpha}^{(-a,a)\times (0,a)}$. The law of a Brownian loop that intersects $\mathbb{R}$ is locally, near the point of intersection, absolutely continuous with respect to the law of a Brownian motion started from there. Applying lemma \ref{LemInstIntersec}, we get that it intersects a.s. a loop in $\mathcal{L}_{\alpha}^{(-a,a)\times (0,a)}$.
\end{proof}

\begin{lemma}
\label{LemConvProbCross1}
Let $\alpha>\frac{1}{2}$. Then
\begin{displaymath}
\lim_{\delta \rightarrow 0^{+}}\mathbb{P}\Big(\bigcap_{i=1}^{3}C_{i}(\mathcal{L}^{Q_{ext},\geq\delta}_{\alpha})\Big)=1.
\end{displaymath}
\end{lemma}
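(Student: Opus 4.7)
My plan is to exploit that each event $C_i(\mathcal{L}^{Q_{ext},\geq\delta}_\alpha)$ is non-increasing in $\delta$: if $\delta' < \delta$, then any collection of loops witnessing $C_i$ at level $\delta$ automatically witnesses it at level $\delta'$, since every loop involved has diameter at least $\delta > \delta'$ and so survives in the larger ensembles. Hence $\mathbb{P}\bigl(\bigcap_{i=1}^{3} C_i(\mathcal{L}^{Q_{ext},\geq\delta}_\alpha)\bigr)$ is non-decreasing as $\delta \to 0^{+}$, with limit $\mathbb{P}\bigl(\bigcap_{i=1}^{3} \bigcup_{\delta>0} C_i(\mathcal{L}^{Q_{ext},\geq\delta}_\alpha)\bigr)$. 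The task reduces to showing that, for each $i$, almost surely some $\delta > 0$ makes $C_i$ hold; equivalently, one need only exhibit a \emph{finite} family of Brownian loops, each of positive diameter, realising $C_i$.

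The key input is the corrected Sheffield--Werner result recalled in the introduction: since $\alpha > \frac{1}{2} = \alpha_{\ast}^{\mathbb{H}}$, for every bounded simply connected subdomain $D \subset \mathbb{H}$ the loops of $\mathcal{L}^{D}_\alpha$ form a single cluster almost surely. I apply this with $D = Q_{int}$ for $C_1$, and with $D = (1,2)^{2}$ and $D = (4,5)\times(1,2)$ for $C_2$ and $C_3$.

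For $C_1$, let $K$ be the a.s.\ unique cluster of $\mathcal{L}^{Q_{int}}_\alpha$. Fix a small $a \in (0,\frac{1}{2})$ and apply Lemma \ref{LemMeetCluster} after an isometry of $\mathbb{C}$ sending $\mathbb{R}$ to $\{1\}\times\mathbb{R}$ and $(-a,a)\times(0,a)$ to $(1,1+a)\times(\frac{3}{2}-a,\frac{3}{2}+a)$ (the measure $\mu_{\mathbb{C}}$ is invariant under such isometries). This yields, almost surely, a loop $\gamma_L \in \mathcal{L}^{(1-a,1+a)\times(\frac{3}{2}-a,\frac{3}{2}+a)}_\alpha \subset \mathcal{L}^{(0,6)\times(1,2)}_\alpha$ that meets both $\{1\}\times(1,2)$ and a loop $\gamma_L^{\ast} \in \mathcal{L}^{(1,1+a)\times(\frac{3}{2}-a,\frac{3}{2}+a)}_\alpha \subset K$. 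The symmetric construction centred at $(5,\frac{3}{2})$ produces $\gamma_R$ and $\gamma_R^{\ast} \in K$ on the right side. Because $K$ is a single cluster, there exists a finite chain $\gamma_L^{\ast} = \eta_0, \eta_1, \dots, \eta_n = \gamma_R^{\ast}$ of loops in $\mathcal{L}^{Q_{int}}_\alpha$ with $\eta_{j-1} \cap \eta_j \ne \emptyset$. Taking $\delta$ smaller than the minimum diameter of the (finitely many) loops $\gamma_L, \gamma_R, \eta_0, \dots, \eta_n$, the chain $\eta_0, \dots, \eta_n$ lies in $\mathcal{L}^{Q_{int},\geq\delta}_\alpha$ and belongs there to a common cluster $K_1$; the loops $\gamma_L, \gamma_R \in \mathcal{L}^{(0,6)\times(1,2),\geq\delta}_\alpha$ then meet $K_1$ together with $\{1\}\times(1,2)$ and $\{5\}\times(1,2)$ respectively, so $C_1(\mathcal{L}^{Q_{ext},\geq\delta}_\alpha)$ is realised. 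The events $C_2$ and $C_3$ are treated identically: Sheffield--Werner inside the relevant small square gives a single cluster, and Lemma \ref{LemMeetCluster}, applied with appropriate reflections to the top and bottom horizontal sides of the surrounding vertical rectangle, supplies the connector loops.

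The one conceptual difficulty is that $C_i$ asks for a cluster made of loops of diameter at least $\delta$, while I have no good control on the connectivity of $\mathcal{L}^{D,\geq\delta}_\alpha$ itself as a function of $\delta$. The plan circumvents this by choosing $\delta$ \emph{after} fixing a specific finite chain of loops realising the event, so that the global cluster structure of the restricted soup need never be analysed. Beyond this, the only non-elementary ingredient is the corrected Sheffield--Werner theorem.
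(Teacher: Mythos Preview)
Your proof is correct and follows essentially the same approach as the paper: monotonicity in $\delta$, an application of Lemma~\ref{LemMeetCluster} (after an isometry) at each end to produce connector loops, the Sheffield--Werner single-cluster result inside $Q_{int}$ to join them by a finite chain, and then choosing $\delta$ below the minimum diameter of the finitely many loops involved. The only differences are presentational---you make the monotonicity and the isometry explicit, whereas the paper packages the limit via a random variable $\bar\delta$---but the logical skeleton is identical.
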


\begin{proof}
It is enough to show that the probability of each of the 
$C_{i}(\mathcal{L}^{Q_{ext},\geq\delta}_{\alpha})$ converges to $1$ as $\delta$ tends to $0$. Since the three cases are very similar, we will do the proof only for $C_{1}(\mathcal{L}^{Q_{ext},\geq\delta}_{\alpha})$.  According to lemma \ref{LemMeetCluster} there is a loop $\gamma$ in $\mathcal{L}^{(0,6)\times(1,2)}_{\alpha}$ that intersects 
$\lbrace 1\rbrace\times (1,2)$ and a loop $\gamma'$ in
$\mathcal{L}^{Q_{int}}_{\alpha}$. Similarly there is a loop $\tilde{\gamma}$ in $\mathcal{L}^{(0,6)\times(1,2)}_{\alpha}$ that intersects 
$\lbrace 5\rbrace\times (1,2)$ and a loop $\tilde{\gamma}'$ in
$\mathcal{L}^{Q_{int}}_{\alpha}$. Since $\alpha>\frac{1}{2}$, $\gamma'$ and $\tilde{\gamma}'$
belong to the same cluster in $\mathcal{L}^{Q_{int}}_{\alpha}$ (\cite{SheffieldWerner2012CLE}). Thus there is a chain of loops $(\gamma_{0},\dots,\gamma_{n})$ in $\mathcal{L}^{Q_{int}}_{\alpha}$, with $\gamma_{0}=\gamma'$ and
$\gamma_{n}=\tilde{\gamma}'$, joining $\gamma'$ and $\tilde{\gamma}'$. If $\delta$ is the minimum of diameters of $(\gamma_{0},\dots,\gamma_{n})$ and $\gamma$ and $\tilde{\gamma}$ then 
$C_{1}(\mathcal{L}^{Q_{ext},\geq\delta}_{\alpha})$ is satisfied. Let $\bar{\delta}$ be maximal value of $\delta$ such that $C_{1}(\mathcal{L}^{Q_{ext},\geq\delta}_{\alpha})$ is satisfied. $\bar{\delta}$ is a well defined random variable with values in $(0,+\infty)$. Then
\begin{displaymath}
\lim_{\delta \rightarrow 0^{+}}\mathbb{P}
(C_{1}(\mathcal{L}^{Q_{ext},\geq\delta}_{\alpha}))=
\lim_{\delta \rightarrow 0^{+}}\mathbb{P}(\delta\leq \bar{\delta})=1.
\end{displaymath}
\end{proof}

Next we recall the  result on approximation of Brownian loops by random walk loops from \cite{LawlerFerreras2007RWLoopSoup}. Let $N\in\mathbb{N}^{\ast}$. We consider the discrete loops $\gamma$ on $\mathbb{Z}\times\mathbb{N}^{\ast}$. We define on these loops a map $\Phi_{N}$ to continuous loops on $\mathbb{H}$. Given
$\gamma$ a discrete loop and $(z_{0},\dots,z_{n-1},z_{0})$ the sequence of the vertices it visits, the continuous loop
$\Phi_{N}\gamma$ satisfies:
\begin{itemize}
\item the duration of $\Phi_{N}\gamma$ is $\frac{n}{2N^{2}}$;
\item for $j\in \lbrace 0,\dots,n-1\rbrace$, $\Phi_{N}\gamma(\frac{j}{2N^{2}})=\frac{z_{j}}{N}$;
\item $\Phi_{N}\gamma(\frac{n}{2N^{2}})=\Phi_{N}\gamma(0)=\frac{z_{0}}{N}$;
\item between the times $\frac{j}{2N^{2}}$, $j\in \lbrace 0,\dots,n\rbrace$, $\Phi_{N}\gamma$ interpolates linearly.
\end{itemize}
The number of jumps $n$ of a discrete loop $\gamma$ will be denoted $s_{\gamma}$.
The life-time of a continuous loop $\tilde{\gamma}$ will be denoted by $t_{\tilde{\gamma}}$.
Let $\theta\in(\frac{2}{3},2)$ and $r\geq 1$. There is a coupling between $\mathcal{L}_{\alpha}^{\mathsf{H}}$ and $\mathcal{L}_{\alpha}^{\mathbb{H}}$ such that except on an event of probability at most $cste\cdot (\alpha+1)r^{2}N^{2-3\theta}$ there is a one to one correspondence between the two sets
\begin{itemize}
\item $\lbrace\gamma\in\mathcal{L}_{\alpha}^{\mathsf{H}}\vert s_{\gamma}>2N^{\theta},
\vert\gamma(0)\vert<Nr\rbrace$,
\item $\lbrace \tilde{\gamma}\in \mathcal{L}_{\alpha}^{\mathbb{H}}\vert t_{\tilde{\gamma}}>N^{\theta-2},
\vert\tilde{\gamma}(0)\vert<r\rbrace$,
\end{itemize}
such that given a discrete loop $\gamma$ and the continuous loop $\tilde{\gamma}$ corresponding to it,
\begin{displaymath}
\Big\vert\dfrac{s_{\gamma}}{2N^{2}}-t_{\tilde{\gamma}}\Big\vert\leq\dfrac{5}{8}N^{-2},\qquad
\sup_{0\leq u\leq 1}\Big\vert\Phi_{N}\gamma\Big(u\frac{s_{\gamma}}{2N^{2}}\Big)-\tilde{\gamma}
(u t_{\tilde{\gamma}})\Big\vert\leq
cste\cdot N^{-1}\log(N).
\end{displaymath}
Next we state without proof a lemma that follows immediately from this approximation.

\begin{lemma}
\label{LemApproxRectangle}
Let $\alpha>0$ and $\delta>0$. As $N$ tends to $+\infty$ the random set of interpolating continuous loops
\begin{displaymath}
\big\lbrace \Phi_{N}\gamma\vert \gamma\in\mathcal{L}_{\alpha}^{N Q_{ext}\cap\mathsf{H},\geq N\delta}\big\rbrace
\end{displaymath}
converges in law to the set of Brownian loops $\mathcal{L}_{\alpha}^{Q_{ext},\geq \delta}$.
\end{lemma}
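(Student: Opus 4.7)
The plan is to read the lemma off from the Lawler--Ferreras coupling recalled just above the statement. Fix $r>0$ with $\overline{Q_{ext}}\subset B(0,r)$ (e.g.\ $r=7$, so $r\geq 1$) and pick $\theta\in(2/3,1)$; the constraints $\theta\in(2/3,2)$ and $r\geq 1$ of the coupling are then satisfied, and the bad-event probability $c(\alpha+1)r^{2}N^{2-3\theta}$ tends to $0$. On the good event $G_{N}$ the coupling produces a bijection between the discrete loops $\gamma\in\mathcal{L}_{\alpha}^{\mathsf{H}}$ with $s_{\gamma}>2N^{\theta}$, $|\gamma(0)|<Nr$ and the Brownian loops $\tilde{\gamma}\in\mathcal{L}_{\alpha}^{\mathbb{H}}$ with $t_{\tilde{\gamma}}>N^{\theta-2}$, $|\tilde{\gamma}(0)|<r$, with paired loops uniformly $cN^{-1}\log(N)$-close after the natural time reparametrisation.

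I then check that on $G_{N}$, for $N$ sufficiently large, this restricts to a bijection between $\mathcal{L}_{\alpha}^{NQ_{ext}\cap\mathsf{H},\geq N\delta}$ and $\mathcal{L}_{\alpha}^{Q_{ext},\geq\delta}$. Any $\gamma\in\mathcal{L}_{\alpha}^{NQ_{ext}\cap\mathsf{H},\geq N\delta}$ satisfies $s_{\gamma}\geq 2N\delta>2N^{\theta}$ (since $\theta<1$) and $|\gamma(0)|<Nr$, so it is in the coupling range and its Brownian partner is uniformly close to $\Phi_{N}\gamma\subset Q_{ext}$. Conversely, $\mathcal{L}_{\alpha}^{Q_{ext},\geq\delta}$ is a.s.\ a finite set of loops, each starting in $B(0,r)$, with compact range at positive distance from $\partial Q_{ext}$, positive duration, and (a.s.) diameter strictly greater than $\delta$ because the Brownian loop diameter has a continuous distribution; hence for $N$ large enough each of their discrete partners lies in $NQ_{ext}$ with diameter $\geq N\delta$.

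The only delicate step, which I regard as the main obstacle, is to rule out two sorts of ``boundary pairs'' in the previous paragraph: (i) a discrete loop $\gamma\in\mathcal{L}_{\alpha}^{NQ_{ext}\cap\mathsf{H},\geq N\delta}$ whose Brownian partner $\tilde{\gamma}$ is $O(N^{-1}\log N)$-close to $Q_{ext}$ but not contained in $Q_{ext}$, and (ii) the same with $\operatorname{diam}(\tilde{\gamma})<\delta$ but within $O(N^{-1}\log N)$ of $\delta$. In each case the expected number of such $\tilde{\gamma}$'s is bounded by the $\alpha\mu_{\mathbb{H}}$-measure of an event that decreases to a $\mu_{\mathbb{H}}$-null set as $N\to\infty$: for (i) this null set consists of loops meeting the smooth curve $\partial Q_{ext}$ without crossing it (Brownian motion hits smooth curves transversally, a.s.), and for (ii) it consists of loops of diameter exactly $\delta$ (the diameter distribution has no atoms). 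Dominated convergence sends both probabilities to $0$. On the intersection of $G_{N}$ with the complements of these rare events, the two random finite sets are in bijection with paired loops uniformly $O(N^{-1}\log(N))$-close, which yields the claimed convergence in law for any natural topology on finite sets of continuous loops.
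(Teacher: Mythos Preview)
The paper states this lemma \emph{without proof}, saying only that it ``follows immediately'' from the Lawler--Ferreras coupling recalled just before. Your proposal is precisely the natural way to unpack that claim: choose $\theta\in(2/3,1)$ and $r$ large enough to contain $Q_{ext}$, observe that loops of diameter $\geq N\delta$ automatically have $s_{\gamma}\geq 2N\delta>2N^{\theta}$, and then argue that the uniform $O(N^{-1}\log N)$ closeness on the good event transfers the containment-in-$Q_{ext}$ and diameter-$\geq\delta$ constraints up to boundary effects of vanishing probability. This is correct and is exactly what the author has in mind.

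One small remark on your phrasing of the null set in (i): what you actually need is that the $\alpha\mu_{\mathbb{H}}$-measure (restricted to loops of diameter $\geq\delta/2$, say, so that it is finite) of loops whose range meets $\partial Q_{ext}\cap\mathbb{H}$ but stays in $\overline{Q_{ext}}$ is zero. Your parenthetical ``Brownian motion hits smooth curves transversally'' is the right idea---by the strong Markov property at the first hitting time of a side of $Q_{ext}$, a Brownian path immediately crosses that line segment almost surely---but it would be cleaner to state the null set simply as ``loops contained in $\overline{Q_{ext}}$ that touch $\partial Q_{ext}\cap\mathbb{H}$''. The reverse mismatch (Brownian loop in $Q_{ext}$, discrete partner outside $NQ_{ext}$) is even easier, since a loop contained in the open set $Q_{ext}$ has compact range at strictly positive distance from the boundary, so the limiting set is empty. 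With that cosmetic adjustment, your argument is complete.
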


We need to show that the above convergence for the uniform norm also implies a convergence of the intersection relations, that is to say that
\begin{displaymath}
\lbrace (\gamma,\gamma')\vert\gamma,\gamma'\in\mathcal{L}_{\alpha}^{N Q_{ext}\cap\mathsf{H},\geq N\delta},
\gamma~\text{intersects}~\gamma'\rbrace
\end{displaymath}
converges in law to
\begin{displaymath}
\lbrace (\tilde{\gamma},\tilde{\gamma}')\vert\tilde{\gamma},\tilde{\gamma}'\in
\mathcal{L}_{\alpha}^{Q_{ext},\geq \delta},\tilde{\gamma}~\text{intersects}~\tilde{\gamma}'\rbrace.
\end{displaymath}

Let $j\in\mathbb{N}$. Let $\gamma$ be a continuous path on $\mathbb{C}$ (not necessarily a loop) of lifetime $t_{\gamma}$. For $r>0$ let
\begin{displaymath}
T_{r}(\gamma):=\inf\lbrace s>0\vert \vert\gamma(s)\vert\geq r\rbrace \in (0,+\infty].
\end{displaymath}
If $T_{r}(\gamma)<+\infty$ let
\begin{displaymath}
e^{i\omega_{r}}:=\dfrac{\gamma(T_{r}(\gamma))}{r}.
\end{displaymath}
Let $I_{j}$ be the real interval
\begin{displaymath}
I_{j}:=\Big(\frac{7}{12}2^{-j},\frac{9}{12}2^{-j}\Big).
\end{displaymath}
For $0<r_{1}<r_{2}$ let $\mathcal{A}(r_{1},r_{2})$ be the annulus
\begin{displaymath}
\mathcal{A}(r_{1},r_{2}):=\lbrace z\in\mathbb{C}\vert r_{1}<\vert z\vert < r_{2}\rbrace.
\end{displaymath}
For $r>0$ let $HD(r)$ be the half-disc
\begin{displaymath}
HD(r):=B(0,r)\cap\lbrace z\in\mathbb{C}\vert \Re(z)>0\rbrace.
\end{displaymath}
We will say that the path $\gamma$ satisfies the condition $\mathcal{C}_{j}$ if
\begin{itemize}
\item $T_{\frac{11}{12}2^{-j}}(\gamma)<+\infty$,
\item after time $T_{\frac{11}{12}2^{-j}}(\gamma)<+\infty$, $\gamma$ hits 
$e^{i(\omega_{2^{-j-1}}+\frac{\pi}{2})}I_{j}$ at a time $\tilde{t}_{j}$ before hitting the circle $S(0,2^{-j})$,
\item on the time interval $(T_{2^{-j-1}}(\gamma),\tilde{t}_{j})$ $\gamma$ stays in the half-disc 
$e^{i\omega_{2^{-j-1}}}HD(2^{-j})$,
\item from time $\tilde{t}_{j}$ the path $\gamma$ stays in the annulus 
$\mathcal{A}(\frac{7}{12}2^{-j},\frac{9}{12}2^{-j})$ until surrounding the disc $B(0,\frac{7}{12}2^{-j})$ once clockwise and hitting $e^{i(\omega_{2^{-j-1}}+\pi)}I_{j}$.
\end{itemize}
Figure $3$ illustrates a path satisfying the condition $\mathcal{C}_{j}$. If this condition is satisfied than $\gamma$ disconnects the disc $B(0,\frac{7}{12}2^{-j})$ from infinity. Moreover if one perturbs $\gamma$ by any continuous function 
$f:[0,t_{\gamma}]\rightarrow\mathbb{C}$ such that 
$\Vert f\Vert_{\infty}\leq \frac{1}{12}2^{-j}$ then the path $(\gamma(s)+f(s))_{0\leq s\leq t_{\gamma}}$ disconnects the disc $B(0,2^{-j-1})$ from infinity. The disconnection is made inside the annulus $\mathcal{A}(2^{-j-1},2^{-j})$.

\begin{center}
\begin{pspicture}(0,-0.5)(6,6)
\pscircle[linestyle=dashed](3,3){3}
\pscircle[linestyle=dashed](3,3){1.5}
\pscircle[linestyle=dashed](3,3){1.75}
\pscircle[linestyle=dashed](3,3){2.25}
\pscircle[linestyle=dashed](3,3){2.75}
\psline[linestyle=dashed](0.764,1)(5.236,5)
\psline[linestyle=dashed](0.764,5)(5.236,1)
\pscurve[linewidth=0.05](2.9,2.9)(3,3)(3.5,3)(4,3.7)(4.1,4)(3.9,4.6)
(4,5.3)(3.7,5.8)(3.5,5.5)(2.4,4.8)(1.8,4.6)(1.5,4.3)
(1.1,3.9)(1.1,2)(1.4,1.6)(2.8,0.9)(4,1.4)(4.8,2.2)
(4.9,3.7)(4.4,4.4)(3.8,4.9)(2,4.9)(1.4,4)(1.2,3.5)
(1.1,2.7)(1.1,2.4)(1.3,1.9)(1.9,1.5)
\psdot[dotsize=3pt 2](4.1,3.97)
\rput(3.35,3.97){\begin{small}
$\frac{e^{i\omega_{2^{-j-1}}}}{2^{j+1}}$
\end{small}}
\psdot[dotsize=3pt 2](4.1,3.97)
\rput(3.2,5.5){$\gamma$}
\rput(3,-0.3){\begin{scriptsize}Fig.3: Representation of a path $\gamma$ satisfying the condition $\mathcal{C}_{j}$\end{scriptsize}}.
\end{pspicture}
\end{center}

\begin{lemma}
\label{LemInfinityDisconnect}
Let $(B_{t})_{0\leq t\leq T}$ be a standard Brownian path on $\mathbb{C}$ starting from $0$. Then almost surely it satisfies the condition $\mathcal{C}_{j}$ for infinitely many values of $j\in\mathbb{N}$.
\end{lemma}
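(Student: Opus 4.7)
My plan follows the standard germ-scale template: use Brownian scaling to obtain a uniform positive lower bound on $\mathbb{P}(\mathcal{C}_j)$, then show that $\{\mathcal{C}_j\text{ i.o.}\}$ lies in the germ $\sigma$-algebra $\mathcal{F}_{0+}$, and conclude by Blumenthal's zero-one law combined with reverse Fatou.

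For the scaling step, I set $B^{(j)}_t := 2^j B_{2^{-2j}t}$; this is again a standard Brownian motion from $0$, and a direct computation gives $T_{r}(B^{(j)}) = 2^{2j}\,T_{2^{-j}r}(B)$ together with $\omega_r(B^{(j)}) = \omega_{2^{-j}r}(B)$. Because every radius appearing in $\mathcal{C}_j$ is exactly $2^{-j}$ times the corresponding radius in $\mathcal{C}_0$, the events $\mathcal{C}_j(B)$ and $\mathcal{C}_0(B^{(j)})$ coincide, and hence $\mathbb{P}(\mathcal{C}_j) = p$ with $p := \mathbb{P}(\mathcal{C}_0)$ for every $j\geq 0$. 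To see that $p > 0$, I would condition on $\omega_{1/2}$, apply the strong Markov property at $T_{1/2}$, and use rotational invariance to reduce to the case $\omega_{1/2} = 0$, i.e.\ a Brownian motion started at $1/2$. Then I would exhibit an explicit smooth reference path $\phi$ which first leaves $B(0,\tfrac{11}{12})$ while staying inside the right half-disc $HD(1)$, next curves upward and hits the open segment $i\cdot I_0$ without leaving $HD(1)$ or touching the unit circle, and finally performs one full clockwise turn around $0$ inside the thin annulus $\mathcal{A}(\tfrac{7}{12},\tfrac{9}{12})$, ending on $-I_0$. The Stroock--Varadhan support theorem guarantees that the Brownian motion stays in a sufficiently small sup-norm tube around $\phi$ with positive probability, and on that event every clause of $\mathcal{C}_0$ is verified.

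For the zero-one law step, $\mathcal{C}_j$ is decided by $(B_t)_{0\leq t\leq \tau_j}$, where $\tau_j$ is the first time after $\tilde t_j$ at which the clockwise loop is either completed or aborted by exiting the thin annulus (with the convention $\tau_j := T_{2^{-j}}$ when $\tilde t_j$ is not well defined). By scaling and the strong Markov property, $\tau_j - T_{2^{-j-1}}$ has the law of $2^{-2j}$ times a fixed random variable with exponential tails (the exit time of a standard Brownian motion from a bounded domain), while $T_{2^{-j-1}}\to 0$ almost surely; a Borel--Cantelli argument then gives $\tau_j\to 0$ almost surely. Consequently, for each $\varepsilon > 0$, the event $\{\mathcal{C}_j\text{ i.o.}\}$ agrees almost surely with $\bigcap_N\bigcup_{j\geq N}\bigl(\mathcal{C}_j\cap\{\tau_j<\varepsilon\}\bigr)$, which lies in $\sigma(B_t:0\leq t\leq \varepsilon)$. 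Hence $\{\mathcal{C}_j\text{ i.o.}\}\in\mathcal{F}_{0+}$, Blumenthal's zero-one law gives $\mathbb{P}(\mathcal{C}_j\text{ i.o.})\in\{0,1\}$, and reverse Fatou gives $\mathbb{P}(\mathcal{C}_j\text{ i.o.})\geq \limsup_j\mathbb{P}(\mathcal{C}_j)=p>0$, forcing the probability to equal one.

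The main obstacle I expect is the positivity step: the reference path $\phi$ must simultaneously cross the circle of radius $\tfrac{11}{12}$ before hitting $i\cdot I_0$, leave the right half-disc precisely through that open segment, and fit a full clockwise winding inside the rather narrow annulus $\mathcal{A}(\tfrac{7}{12},\tfrac{9}{12})$, and the tube radius around $\phi$ must be chosen small enough that none of these geometric constraints is spoiled.
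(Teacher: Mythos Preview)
Your argument is correct, but it takes a different and somewhat heavier route than the paper's. The paper observes one extra structural fact that you do not use: the events $\mathcal{C}_j$ are not merely identically distributed but actually \emph{independent}. Indeed, $\mathcal{C}_j$ is entirely determined by the path on the time interval $(T_{2^{-j-1}},T_{2^{-j}})$, and by the strong Markov property together with the rotational invariance built into the definition of $\mathcal{C}_j$ (everything is expressed relative to the angle $\omega_{2^{-j-1}}$), the conditional probability of $\mathcal{C}_j$ given $\mathcal{F}_{T_{2^{-j-1}}}$ is a constant. Since these annular time intervals are disjoint as $j$ varies, the $\mathcal{C}_j$ form an i.i.d.\ sequence, and the second Borel--Cantelli lemma finishes the proof in one line. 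Your Blumenthal-plus-reverse-Fatou argument is a valid substitute that would work even without independence, at the cost of the auxiliary measurability discussion around $\tau_j$; incidentally, that discussion can be shortened by simply noting $\mathcal{C}_j\in\mathcal{F}_{T_{2^{-j}}}$ and $T_{2^{-j}}\to 0$. One small point: your scaling $B^{(j)}_t=2^jB_{2^{-2j}t}$ tacitly assumes $B$ is defined for all $t\geq 0$, whereas the statement gives $B$ only on $[0,T]$; this is harmless (extend $B$ as the paper does), but worth saying explicitly.
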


\begin{proof}
Let $\widetilde{B}$ be the Brownian path $B$ continued for $t\in(0,+\infty)$. The events $"\widetilde{B}~\text{satisfies the condition}~\mathcal{C}_{j}"$ are i.i.d. Indeed such an event is rotation invariant and depends only on 
$\widetilde{B}$ on the time interval $(T_{2^{-j-1}}(\widetilde{B}),T_{2^{-j}}(\widetilde{B}))$. Moreover the probability of such an event is non-zero. Thus $\widetilde{B}$ satisfies the condition $\mathcal{C}_{j}$ for infinitely many values of $j\in\mathbb{N}$. Since
\begin{displaymath}
\lim_{j\rightarrow +\infty}T_{2^{-j}}(\widetilde{B})=0,
\end{displaymath}
so does $B$.
\end{proof}

\begin{lemma}
\label{LemIntersections}
Let $z_{1},z_{2}\in\mathbb{C}$ and $t_{1},t_{2}>0$. Let $(b^{(1)}_{s})_{0\leq s\leq t_{1}}$ and
$(b^{(2)}_{s})_{0\leq s\leq t_{2}}$ be two independent standard Brownian bridges from $z_{1}$ to $z_{1}$ and 
$z_{2}$ to $z_{2}$ respectively. On the event that $b^{(1)}$ intersects $b^{(2)}$ there is a.s. $\varepsilon>0$ such that
for all continuous functions $f_{1}:[0,t_{1}]\rightarrow \mathbb{C}$ and $f_{2}:[0,t_{2}]\rightarrow \mathbb{C}$ of infinity norm $\Vert f_{i}\Vert_{\infty}\leq\varepsilon$, $(b^{(1)}_{s}+f_{1}(s))_{0\leq s\leq t_{1}}$ intersects
$(b^{(2)}_{s}+f_{2}(s))_{0\leq s\leq t_{2}}$.
\end{lemma}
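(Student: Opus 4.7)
The plan is to produce a random scale $\varepsilon>0$ at which the intersection is topologically locked: one of the bridges traces, near an intersection point $z$, a loop that disconnects a small disc around $z$ from infinity, while the other bridge both enters this disc (because it passes through $z$) and leaves it. Any sup-norm perturbation of amplitude at most $\varepsilon$ then preserves both features and hence the intersection.

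Concretely, on the intersection event set $\sigma_{2}:=\inf\lbrace s\geq 0 \vert b^{(2)}(s)\in\mathrm{range}(b^{(1)})\rbrace$, which lies in $[0,t_{2})$, and $z:=b^{(2)}(\sigma_{2})$, so that also $z=b^{(1)}(\tau_{1})$ for some $\tau_{1}\in[0,t_{1}]$. Using the strong Markov property for the Brownian bridge $b^{(2)}$ at the stopping time $\sigma_{2}$ (conditionally on $b^{(1)}$), the path $s\mapsto b^{(2)}(\sigma_{2}+s)-z$ is, on a short initial time interval, absolutely continuous with respect to a standard Brownian motion started from $0$: the bridge drift at time $s$ is of order $s/(t_{2}-\sigma_{2})$, much smaller than $2^{-j}$ on the time window where condition $\mathcal{C}_{j}$ is read off, once $j$ is large. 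Lemma \ref{LemInfinityDisconnect} then yields, almost surely, that this shifted continuation satisfies $\mathcal{C}_{j}$ (centered at $z$ in the original coordinates) for infinitely many $j\in\mathbb{N}$.

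I then select $j$ large enough so that both (i) $\mathcal{C}_{j}$ is satisfied by $b^{(2)}(\sigma_{2}+\cdot)-z$, and (ii) the range of $b^{(1)}$ contains at least one point outside $B(z,\tfrac{13}{12}2^{-j})$; (ii) holds for every sufficiently large $j$ because $b^{(1)}$ is non-constant. Set $\varepsilon:=\tfrac{1}{12}2^{-j}$. For any continuous $f_{1},f_{2}$ with $\Vert f_{i}\Vert_{\infty}\leq\varepsilon$, the discussion preceding Lemma \ref{LemInfinityDisconnect} guarantees that $b^{(2)}+f_{2}$ disconnects $B(z,2^{-j-1})$ from infinity, the disconnection occurring inside the annulus $z+\mathcal{A}(2^{-j-1},2^{-j})$. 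Meanwhile $b^{(1)}+f_{1}$ visits $b^{(1)}(\tau_{1})+f_{1}(\tau_{1})\in B(z,\varepsilon)\subset B(z,2^{-j-1})$ and, by (ii), also visits a point outside $B(z,2^{-j})$. Continuity then forces $b^{(1)}+f_{1}$ to cross the disconnecting curve of $b^{(2)}+f_{2}$, which is the required intersection.

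The main obstacle is the transfer of Lemma \ref{LemInfinityDisconnect} from a Brownian motion started at $0$ to the Brownian bridge $b^{(2)}$ restarted at $z$ at the (possibly random) time $\sigma_{2}$. One must invoke the strong Markov property for Brownian bridges at this hitting time and control the bridge drift on the shrinking time windows on which each event $\mathcal{C}_{j}$ is actually read, so as to preserve the i.i.d.-up-to-absolute-continuity structure that drives the proof of Lemma \ref{LemInfinityDisconnect}. Everything else (choice of the hitting time $\sigma_{2}$, existence of a large exterior point in the range of $b^{(1)}$, and the disconnection/crossing argument) is a routine consequence of continuity and the stability statement accompanying the definition of $\mathcal{C}_{j}$.
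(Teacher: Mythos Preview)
Your argument is correct and follows essentially the same strategy as the paper's proof, with the roles of $b^{(1)}$ and $b^{(2)}$ interchanged: the paper stops $b^{(1)}$ at the first time it hits the range of $b^{(2)}$ and uses the continuation of $b^{(1)}$ to build the disconnecting curve via Lemma~\ref{LemInfinityDisconnect}, while $b^{(2)}$ supplies the crossing, whereas you stop $b^{(2)}$ at $\sigma_{2}$ and let $b^{(2)}$ disconnect while $b^{(1)}$ crosses. The absolute-continuity justification for transferring Lemma~\ref{LemInfinityDisconnect} to the post-hitting bridge segment is handled identically in both proofs.
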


\begin{proof}
Let $T^{(1)}_{2}$ be the first time $b^{(1)}$ hits the range of $b^{(2)}$. If the two path do not intersect each other 
$T^{(1)}_{2}=+\infty$. On the event $T^{(1)}_{2}<+\infty$ the conditional law of 
$(b^{(1)}_{T^{(1)}_{2}+s}-b^{(1)}_{T^{(1)}_{2}})_{0\leq s\leq t_{1}-T^{(1)}_{2}-\varepsilon}$ ($\varepsilon>0$ a small constant) given the value $T^{(1)}_{2}$ is absolutely continuous with respect the law of a Brownian path starting from $0$. From lemma \ref{LemInfinityDisconnect} follows that
the path $(b^{(1)}_{T^{(1)}_{2}+s}-b^{(1)}_{T^{(1)}_{2}})_{0\leq s\leq t_{1}-T^{(1)}_{2}}$ satisfies the condition $\mathcal{C}_{j}$ for infinitely many values of $j\in\mathbb{N}$. Let 
\begin{multline*}
\tilde{\jmath}:=\max\Big\lbrace j\in\mathbb{N}\vert
(b^{(1)}_{T^{(1)}_{2}+s}-b^{(1)}_{T^{(1)}_{2}})_{0\leq s\leq t_{1}-T^{(1)}_{2}}~
\text{satisfies the condition}~\mathcal{C}_{j}\\
\text{and}~\exists s\in [0,t_{2}], \vert b^{(2)}_{s}- b^{(2)}_{T^{(1)}_{2}}\vert\geq \frac{13}{12}2^{-j}\Big\rbrace.
\end{multline*}
$\tilde{\jmath}$ is a r.v. defined on the event where $b^{(1)}$ and $b^{(2)}$ intersect. If $f_{1}$ and $f_{2}$ are such that $\Vert f_{i}\Vert\leq\frac{1}{12}2^{-\tilde{\jmath}}$ then the path $b^{(1)}+f_{1}$ disconnects the disc 
$B(b^{(1)}_{T^{(1)}_{2}},2^{-\tilde{\jmath}-1})$ from infinity inside the annulus  
$b^{(1)}_{T^{(1)}_{2}}+\mathcal{A}(2^{-\tilde{\jmath}-1},2^{-\tilde{\jmath}})$
and the path $b^{(2)}+f_{2}$ crosses from the circle $S(b^{(1)}_{T^{(1)}_{2}},2^{-\tilde{\jmath}-1})$ to the circle $S(b^{(1)}_{T^{(1)}_{2}},2^{-\tilde{\jmath}})$, so the two must intersect.
\end{proof}

Observe that two discrete loops $\gamma$ and $\gamma'$ intersect each other if and only if the continuous loops
$\Phi_{N}\gamma$ and $\Phi_{N}\gamma'$ do. From lemmas \ref{LemApproxRectangle} and \ref{LemIntersections} follows:

\begin{corollary}
\label{CorIntersections}
Let $\alpha>0$ and $\delta>0$. As $N$ tends to $+\infty$ the random set of interpolating continuous loops
\begin{displaymath}
\big\lbrace \Phi_{N}\gamma\vert \gamma\in\mathcal{L}_{\alpha}^{NQ_{ext}\cap\mathsf{H},\geq N\delta}\big\rbrace
\end{displaymath}
jointly with the intersection relations
\begin{displaymath}
\lbrace (\gamma,\gamma')\vert\gamma,\gamma'\in\mathcal{L}_{\alpha}^{NQ_{ext}\cap\mathsf{H},\geq N\delta},
\gamma~\text{intersects}~\gamma'\rbrace
\end{displaymath}
converges in law to the set of Brownian loops $\mathcal{L}_{\alpha}^{Q_{ext},\geq \delta}$ jointly with the intersection relations
\begin{displaymath}
\lbrace (\tilde{\gamma},\tilde{\gamma}')\vert\tilde{\gamma},\tilde{\gamma}'\in
\mathcal{L}_{\alpha}^{Q_{ext},\geq \delta},\tilde{\gamma}~\text{intersects}~\tilde{\gamma}'\rbrace.
\end{displaymath}
\end{corollary}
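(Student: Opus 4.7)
The plan is to upgrade the set-level convergence of Lemma \ref{LemApproxRectangle} to a convergence of intersection relations, using Lemma \ref{LemIntersections} to handle intersecting pairs and a soft compactness argument for non-intersecting ones. By Skorokhod's representation theorem I would work on a common probability space on which the convergence from Lemma \ref{LemApproxRectangle} is almost sure. On this space, for $N$ large enough, there is a one-to-one correspondence $\gamma\leftrightarrow\tilde{\gamma}$ between the discrete loops in $\mathcal{L}_{\alpha}^{NQ_{ext}\cap\mathsf{H},\geq N\delta}$ and the Brownian loops in $\mathcal{L}_{\alpha}^{Q_{ext},\geq\delta}$, with $\Phi_{N}\gamma$ uniformly close to $\tilde{\gamma}$ after reparametrising time to a common interval. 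Since $\mathcal{L}_{\alpha}^{Q_{ext},\geq\delta}$ is a.s. finite it suffices to treat the finitely many pairs $(\tilde{\gamma}_{1},\tilde{\gamma}_{2})$ one at a time, and by the observation preceding the corollary two discrete loops intersect iff their interpolants do.

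For a pair that does not intersect, the ranges of $\tilde{\gamma}_{1}$ and $\tilde{\gamma}_{2}$ are disjoint compact sets and hence at some positive Hausdorff distance $d>0$. Uniform convergence implies that for $N$ large the ranges of $\Phi_{N}\gamma_{1}$ and $\Phi_{N}\gamma_{2}$ stay disjoint, so $\gamma_{1}$ and $\gamma_{2}$ do not intersect either. The substantive case is intersecting pairs, where I would invoke Lemma \ref{LemIntersections}: since distinct loops in $\mathcal{L}_{\alpha}^{Q_{ext},\geq\delta}$ are, conditionally on their roots and lifetimes, independent standard Brownian bridges, the lemma produces an a.s. $\varepsilon>0$ such that any two continuous perturbations with sup-norm at most $\varepsilon$ still yield intersecting paths. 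Choosing $N$ large enough that both differences $\Phi_{N}\gamma_{i}-\tilde{\gamma}_{i}$ have sup-norm below $\varepsilon$ forces $\Phi_{N}\gamma_{1}$ and $\Phi_{N}\gamma_{2}$ to intersect, hence $\gamma_{1}$ and $\gamma_{2}$ intersect too.

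The point I expect to require the most care is the mismatch of time intervals: Lemma \ref{LemIntersections} is phrased for perturbations on the common domain $[0,t_{i}]$, whereas the interpolated loop $\Phi_{N}\gamma_{i}$ lives on $[0,s_{\gamma_{i}}/(2N^{2})]$, an interval that differs from $[0,t_{\tilde{\gamma}_{i}}]$ by at most $\tfrac{5}{8}N^{-2}$. This is harmless because intersection depends only on the trace of the path: one can either reparametrise both paths linearly to $[0,1]$ and apply the lemma in that parametrisation, or extend the shorter path by a constant on a negligible interval and absorb the extension into the perturbation. Running the two cases over the (a.s. finite) collection of distinct pairs then yields the joint convergence claimed in the corollary.
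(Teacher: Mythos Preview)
Your argument is correct and is exactly the approach the paper intends: the paper's own justification consists solely of the observation that discrete loops intersect iff their interpolants do, together with the sentence ``From lemmas \ref{LemApproxRectangle} and \ref{LemIntersections} follows,'' and you have spelled out precisely the details this leaves implicit (the Skorokhod coupling, the compactness treatment of non-intersecting pairs, and the reparametrisation needed to apply Lemma~\ref{LemIntersections}). There is nothing to correct; your write-up is simply a fuller version of the paper's one-line reduction.
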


We consider the scaled up rectangle $N Q_{ext}$ and $N Q_{int}$. The next lemma deals with the probability that the discrete loops $\mathcal{L}_{\alpha}^{NQ_{ext}\cap\mathsf{H}}$ realise the \textit{special crossing event with exterior rectangle $N Q_{ext}$ and interior rectangle $N Q_{int}$}. See figures $1$ and $2$ and consider that $Q_{ext}$ is replaced by $N Q_{ext}$, $Q_{int}$ by $N Q_{int}$ and $\mathcal{L}_{\alpha}^{Q_{ext},\geq\delta}$ by
$\mathcal{L}_{\alpha}^{NQ_{ext}\cap\mathsf{H}}$.

\begin{lemma}
\label{LemConvSpecCross}
Let $\alpha>\frac{1}{2}$.
As $N$ tends to $+\infty$, the probability that the loops $\mathcal{L}_{\alpha}^{NQ_{ext}\cap\mathsf{H}}$ realise a \textit{special crossing event with exterior rectangle $N Q_{ext}$ and interior rectangle $N Q_{int}$}
converges to $1$.
\end{lemma}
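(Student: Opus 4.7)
The plan is to lift the Brownian special crossing event to the discrete setting using the approximation of random walk loops by Brownian loops. By the scale invariance of $\mu_{\mathbb{C}}$, the Brownian soup $\mathcal{L}^{NQ_{ext}}_{\alpha}$ pulled back by $z\mapsto z/N$ has the same law as $\mathcal{L}^{Q_{ext}}_{\alpha}$, and the special crossing event with exterior $NQ_{ext}$, interior $NQ_{int}$ and diameter cutoff $N\delta$ corresponds under this rescaling to $\bigcap_{i=1}^{3}C_{i}(\mathcal{L}^{Q_{ext},\geq\delta}_{\alpha})$. Fix $\varepsilon>0$; by Lemma \ref{LemConvProbCross1} choose $\delta>0$ so that $\mathbb{P}(\bigcap_i C_i(\mathcal{L}^{Q_{ext},\geq\delta}_\alpha))>1-\varepsilon$. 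Since the special crossing event is monotone under adding loops to the soup (a cluster can only grow, witnessing loops only accumulate), it suffices to prove that with probability tending to $1$ the rescaled interpolated discrete loops $\{\Phi_N\gamma : \gamma\in\mathcal{L}^{NQ_{ext}\cap\mathsf{H},\geq N\delta}_{\alpha}\}$ realise the special crossing event for $(Q_{ext},Q_{int})$ at cutoff $\delta$.

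By Corollary \ref{CorIntersections} combined with Skorokhod's representation, one may couple these interpolated discrete loops with the Brownian soup $\mathcal{L}^{Q_{ext},\geq\delta}_{\alpha}$ so that, on an event of probability tending to $1$ as $N\to\infty$, the two families are in bijection, each matched pair is at sup distance $O(N^{-1}\log N)$, and the pairwise intersection relations coincide. Under this coupling the cluster structure of $\mathcal{L}^{Q_{int},\geq\delta}_{\alpha}$ is transferred verbatim to the interpolated discrete cluster structure, so the three conditions $C_i$ reduce to checking two further points: (a) each witnessing loop lies in the appropriate open sub-rectangle (e.g.\ $(0,6)\times(1,2)$ for $C_1$), and (b) each witnessing loop intersects the relevant boundary segment (e.g.\ $\{1\}\times(1,2)$).

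Point (a) is handled by first assuming, at the cost of an arbitrarily small additional probability loss, that on the Brownian special crossing event the witnessing loops are contained in a slightly shrunken sub-rectangle; the $O(N^{-1}\log N)$ sup-norm error then places the interpolated discrete loops inside the required open rectangle for $N$ large. Point (b) is the main obstacle, as it is not directly covered by the earlier lemmas. We need a transversality statement: if a Brownian loop meets the vertical segment $\{1\}\times(1,2)$, then there is an a.s.\ positive random $\eta$ such that every continuous perturbation of sup-norm $\leq\eta$ also meets this segment with second coordinate in $(1,2)$. The proof mirrors Lemma \ref{LemIntersections}: apply Lemma \ref{LemInfinityDisconnect} to the Brownian loop re-rooted at its first hitting time of the line $x=1$, to extract a scale $2^{-j}$ on which the loop makes a full transversal passage across a thin vertical strip around the hitting point, away from the endpoints of the segment. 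Combining this with the elementary observation that any piecewise linear path on $N^{-1}\mathbb{Z}^{2}$ which crosses the line $x=1$ must visit a lattice vertex lying on it, one concludes that for $N$ large enough the interpolated discrete loop — and hence the underlying discrete loop — intersects $\{N\}\times(N,2N)$ in the required range. Letting $\varepsilon\downarrow 0$ concludes the proof.
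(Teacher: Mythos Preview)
Your argument is correct and follows the same route as the paper: bound below by the diameter-truncated soup, pass to the Brownian limit via Corollary~\ref{CorIntersections}, and conclude with Lemma~\ref{LemConvProbCross1}. The paper's own proof is far terser---it simply asserts that Corollary~\ref{CorIntersections} yields convergence of the special-crossing probability without justifying that the event is a continuity set for the joint law of loops and intersection relations---so your explicit treatment of the range constraints~(a) and especially the loop--segment transversality~(b), handled via the same disconnection idea as in Lemmas~\ref{LemInfinityDisconnect}--\ref{LemIntersections}, supplies details the paper leaves implicit.
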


\begin{proof}
Let $\delta>0$.
The probability that the loops $\mathcal{L}_{\alpha}^{NQ_{ext}\cap\mathsf{H}}$ realise the \textit{special crossing event with exterior rectangle $N Q_{ext}$ and interior rectangle $N Q_{int}$} is at least as large as the probability that the loops $\mathcal{L}_{\alpha}^{NQ_{ext}\cap\mathsf{H},\geq N\delta}$ realise the \textit{special crossing event} with the same interior and exterior rectangle. From the corollary \ref{CorIntersections} follows that the latter probability converges as $N\rightarrow +\infty$ to
\begin{displaymath}
\mathbb{P}\left(\bigcap_{i=1}^{3}C_{i}(\mathcal{L}^{Q_{ext},\geq\delta}_{\alpha})\right).
\end{displaymath}
We conclude by applying the lemma \ref{LemConvProbCross1}.
\end{proof}

To conclude that for $\alpha>\frac{1}{2}$, $\mathcal{L}_{\alpha}^{\mathsf{H}}$ has an infinite cluster we will use a block percolation construction that will combine \textit{special crossing events}. 

\begin{proofof1}
From \cite{Lupu2014LoopsGFF} we know already that $\alpha_{\ast}^{\mathsf{H}}\leq\frac{1}{2}$. We need to show that
for $\alpha>\frac{1}{2}$, $\mathcal{L}_{\alpha}^{\mathsf{H}}$ has an infinite cluster.

Let $\alpha>\frac{1}{2}$ and $N\geq 1$. We consider a dependent edge percolation, 
denoted $(\omega^{N}(e))_{e~\text{edge of}~\mathsf{H}}$, on the discrete half plane $\mathsf{H}$. 
If $e$ is an edge of form 
$\lbrace(j,k),(j+1,k)\rbrace$, $k\geq 1$, then $\omega^{N}(e)=1$ (open edge) if 
$\mathcal{L}_{\alpha}^{(N Q_{int}+3Nj+i3Nk)\cap\mathsf{H}}$
achieves a \textit{special crossing event with exterior rectangle $N Q_{ext}+3Nj+i3Nk$ and interior rectangle 
$N Q_{int}+3Nj+i3Nk$}. If $e$ is an edge of form 
$\lbrace(j,k),(j,k+1)\rbrace$, $k\geq 1$, then 
$\omega^{N}(e)=1$ if $\mathcal{L}_{\alpha}^{(iN Q_{int}+3Nj+i3Nk)\cap\mathsf{H}}$
achieves a \textit{special crossing event with exterior rectangle $iN Q_{ext}+3Nj+i3Nk$ and interior rectangle 
$iN Q_{int}+3Nj+i3Nk$}, where the multiplication by $i$ means rotation by $+\frac{\pi}{2}$. $\omega^{N}$ is a $1$-dependent edge percolation: if two disjoint subsets of edges $E_{1}$ and $E_{2}$ are such that no edge is adjacent to both $E_{1}$ and $E_{2}$, then $(\omega^{N}(e))_{e\in E_{1}}$ and $(\omega^{N}(e))_{e\in E_{2}}$ are independent. This is due to the fact that the subsets of loops involved in the definition of \textit{special crossing events} for edges in $E_{1}$ and and edges in $E_{2}$ are disjoint.  To an open path in $\omega^{N}$ corresponds a cluster of $\mathcal{L}_{\alpha}^{\mathsf{H}}$ whose loops form crossings of related interior rectangles. Thus if $\omega^{N}$ has an unbounded cluster, then so does $\mathcal{L}_{\alpha}^{\mathsf{H}}$. See next picture.

\begin{center}
\begin{pspicture}(0,-0.5)(9,9)
\psline[linestyle=dashed, linewidth=0.025](0,0)(9,0)
\psline[linestyle=dashed, linewidth=0.025](0,0)(0,9)
\psline[linestyle=dashed, linewidth=0.025](9,0)(9,9)
\psline[linestyle=dashed, linewidth=0.025](0,9)(9,9)
\psline[linestyle=dashed, linewidth=0.025](0,3)(9,3)
\psline[linestyle=dashed, linewidth=0.025](3,0)(3,9)
\psline[linestyle=dashed, linewidth=0.025](6,0)(6,9)
\psline[linestyle=dashed, linewidth=0.025](0,6)(9,6)
\psline[linewidth=0.025](0,1)(9,1)
\psline[linewidth=0.025](0,2)(9,2)
\psline[linewidth=0.025](0,4)(9,4)
\psline[linewidth=0.025](0,5)(9,5)
\psline[linewidth=0.025](0,7)(9,7)
\psline[linewidth=0.025](0,8)(9,8)
\psline[linewidth=0.025](1,0)(1,9)
\psline[linewidth=0.025](2,0)(2,9)
\psline[linewidth=0.025](4,0)(4,9)
\psline[linewidth=0.025](5,0)(5,9)
\psline[linewidth=0.025](7,0)(7,9)
\psline[linewidth=0.025](8,0)(8,9)
\psbezier[linewidth=0.05](0.9,4.3)(2.4,4.8)(3.8,4.2)(5.1,4.6)
\psbezier[linewidth=0.05](1.3,3.9)(1.7,4.3)(1.2,4.7)(1.9,5.1)
\psbezier[linewidth=0.05](4.4,3.9)(4.6,4.3)(4.2,4.7)(4.3,5.1)
\psbezier[linewidth=0.05](3.9,4.1)(5.4,4.9)(7.8,4.6)(8.1,4.3)
\psbezier[linewidth=0.05](4.5,3.9)(4.8,4.3)(4.9,4.7)(4.5,5.1)
\psbezier[linewidth=0.05](7.2,3.9)(7.6,4.3)(7.6,4.7)(7.9,5.1)
\psbezier[linewidth=0.05](7.4,3.9)(7.4,5.5)(7.6,7)(7.9,8.1)
\psbezier[linewidth=0.05](6.9,4.3)(7.3,4.7)(7.6,4.7)(8.1,4.8)
\psbezier[linewidth=0.05](6.9,7.3)(7.3,7.1)(7.6,7.7)(8.1,7.8)
\psbezier[linewidth=0.05](3.9,7.5)(5.4,7.1)(7.8,7.9)(8.1,7.3)
\psbezier[linewidth=0.05](7.2,6.9)(7.3,7.3)(7.6,7.6)(7.4,8.1)
\psbezier[linewidth=0.05](4.4,6.9)(4.3,7.3)(4.6,7.6)(4.5,8.1)
\psbezier[linewidth=0.05](4.9,0.9)(4.3,1.3)(4.6,1.6)(4.5,2.1)
\psbezier[linewidth=0.05](1.8,0.9)(1.3,1.3)(1.6,1.6)(1.9,2.1)
\psbezier[linewidth=0.05](0.9,1.7)(2.4,1.3)(3.8,1.2)(5.1,1.6)
\rput(4.5,-0.3){\begin{scriptsize}Fig.4: Crossings achieved by subsets of loops in $\mathcal{L}_{\alpha}^{\mathsf{H}}$, corresponding to five open edges in $\omega^{N}$.\end{scriptsize}}
\end{pspicture}
\end{center}

The probability $\mathbb{P}(\omega^{N}(e)=1)$ is uniform and we will denote it $p_{N}$. According to the lemma
\ref{LemConvSpecCross}
\begin{displaymath}
\lim_{N\rightarrow +\infty}p_{N}=1.
\end{displaymath}
Thus for $N$ large enough $\tilde{p}(p_{N})>\frac{1}{2}$. $\frac{1}{2}$ is the critical probability for the i.i.d. Bernoulli edge percolation on $\mathsf{H}$. So for $N$ large enough $\omega^{N}$ contains a supercritical
i.i.d. Bernoulli edge percolation and percolates itself. Thus $\mathcal{L}_{\alpha}^{\mathsf{H}}$ percolates too.
Actually, since our construction only uses loops of diameter less or equal to $6N$, we have also percolation for
$\mathcal{L}_{\alpha}^{\mathsf{H},\leq 6N}$.
\end{proofof1}

\section*{Acknowledgements}

The author would like to thank Artem Sapozhnikov and Yinshan Chang for their comments on previous versions of this paper.

\bibliographystyle{plain}
\bibliography{tituslupuabr}

\end{document}